\documentclass[11pt]{amsart}

\setlength{\parindent}{0pt}
\setlength{\textwidth}{5.8in}
\setlength{\oddsidemargin}{0.4in}
\setlength{\evensidemargin}{0.4in}
\parskip = 0pt

\allowdisplaybreaks[4]
\newtheorem{proposition}{Proposition}[section]
\newtheorem{lemma}[proposition]{Lemma}

\newtheorem{theorem}[proposition]{Theorem}

\theoremstyle{definition}

\newtheorem{remark}[proposition]{Remark}

\newcommand{\thlabel}[1]{\label{th:#1}}
\newcommand{\thref}[1]{Theorem~\ref{th:#1}}
\newcommand{\selabel}[1]{\label{se:#1}}
\newcommand{\seref}[1]{Section~\ref{se:#1}}
\newcommand{\lelabel}[1]{\label{le:#1}}
\newcommand{\leref}[1]{Lemma~\ref{le:#1}}

\newcommand{\eqlabel}[1]{\label{eq:#1}}
\newcommand{\equref}[1]{(\ref{eq:#1})}

\def\ot{\otimes}

\newcommand{\Cc}{\mathcal{C}}

\newcommand{\Mm}{\mathcal{M}}

\newcommand{\Zz}{\mathcal{Z}}

\def\*C{{}^*\hspace*{-1pt}{\Cc}}
\def\text#1{{\rm {\rm #1}}}

\def\ul{\underline}

\def\equal#1{\smash{\mathop{=}\limits^{#1}}}


\usepackage{xypic}

\usepackage{color,amssymb,graphicx,amscd}

\input xy
\xyoption {all} \CompileMatrices

\begin{document}
\title[The uniqueness of braidings]
{The uniqueness of braidings on the monoidal category of
non-commutative descent data}

\author{A. L. Agore}
\address{Faculty of Engineering, Vrije Universiteit Brussel, Pleinlaan 2, B-1050 Brussels, Belgium}
\email{ana.agore@vub.ac.be and ana.agore@gmail.com}

\author{S. Caenepeel}
\address{Faculty of Engineering, Vrije Universiteit Brussel, Pleinlaan 2, B-1050 Brussels, Belgium}
\email{scaenepe@vub.ac.be}

\author{G. Militaru}
\address{Faculty of Mathematics and Computer Science, University of Bucharest, Str.
Academiei 14, RO-010014 Bucharest 1, Romania}
\email{gigel.militaru@fmi.unibuc.ro and gigel.militaru@gmail.com}
\subjclass[2010]{16T10, 16T05, 16S40}

\keywords{monoidal categories, braidings, quasi-braidings.}

\thanks{A.L. Agore is research fellow ``Aspirant'' of FWO-Vlaanderen. S. Caenepeel is supported by
FWO project G.0117.10 ``Equivariant Brauer groups and Galois
deformations''. A.L. Agore and G. Militaru are supported by the
CNCS - UEFISCDI grant no. 88/05.10.2011 ''Hopf algebras and
related topics''.}

\begin{abstract}
Let $A$ be an algebra over a commutative ring $k$. It is known
that the categories of non-commutative descent data, of comodules
over the Sweedler canonical coring, of right $A$-modules with a
flat connection are isomorphic as braided monoidal categories to
the center of the category of $A$-bimodules. We prove that the
braiding on these categories is unique if there exists a
$k$-linear unitary map $E : A \to Z(A)$. This condition is
satisfied if $k$ is a field or $A$ is a commutative or a separable
algebra.
\end{abstract}

\maketitle

\section*{Introduction}
Braided monoidal categories play a central role in the
representation theory of quantum groups, Kac-Moody algebras,
quantum field theory, topological invariants to links, knots and
3-manifolds, or non-commutative differential geometry.

A natural problem is to classify all possible braidings on a given
monoidal category $\Cc$. The problem is far from being trivial as
we have to compute the class of all possible natural isomorphisms
$c_{C, D}: C\ot D\to D\ot C$, for all $C$, $D\in \Cc$, and this
depends heavily on the structure of the objects in $\Cc$. The
basic example is the following: braidings on the category of
representations of a bialgebra $H$ are parameterized by R-matrices
$R\in H\ot H$. A special role in the classification of all
braidings on a given monoidal category will be played by monoidal
categories $\Cc$ on which we have a \textit{unique} braided
structure. There are two typical examples of such monoidal
categories: the category of all sets $({\mathcal Set}, \times ,
\{*\} )$ and $(\Mm_k, -\ot_k - , k)$, the category of $k$-modules
over a commutative ring. The only braiding on this two categories
is the usual flip map. In \cite{ACM1} we examined braidings on the
category of bimodules over an algebra $A$. In most cases there is
no braiding at all; in particular situations, for example when $A$
is a central simple algebra over a field $k$, there is a unique
braiding, see \cite[Theorem 2.1, Cor. 2.7]{ACM1}.

In this note, we study braidings on the monoidal category
$\Mm^{A\ot A}$ of comodules over the Sweedler canonical $A$-coring
$A\ot A$. This category has several alternative descriptions: it
is isomorphic to the category of descent data $\ul{\rm
Desc}(A/k)$, to the category $\ul{\rm Conn}(A/k)$ of right
$A$-modules with a flat connection as defined in noncommutative
geometry \cite{tb06} and to the center $\Zz({}_A\Mm_A)$ of the
monoidal category of $A$-bimodules, all these isomorphisms can be
found in \cite[Theorem 2.10]{ACM2}.

The center $\Zz({}_A\Mm_A)$ is braided by construction, hence it
follows that $\Mm^{A\ot A}$, $\ul{\rm Conn}(A/k)$ and $\ul{\rm
Desc}(A/k)$ are also braided: the explicit description of this
braiding, called the canonical braiding, is given in
\cite[Corollary 2.11]{ACM2}, see also \cite[Lemma 2.2]{tb11}. The
aim of this note is to show that this canonical braiding is
unique; we will show this in \thref{unicitbr}, under the
assumption that there exists a $k$-linear unitary map $E : A \to
Z(A)$. This holds true if $k$ is a field or $A$ is a commutative
or a separable algebra over $k$.

\section{Preliminaries}\selabel{0.1}
Recall from \cite[Def. XI.2.1]{K} that a monoidal category is a
sixtuple $(\Cc, \ot,I, a,l,r)$, where $\Cc$ is a category, $\ot:\
\Cc\times \Cc\to \Cc$ is a bifunctor, $I$ is an object in $\Cc$,
and $a:\ \ot \circ (\ot \times id)\to \ot \circ (id \times \ot)$,
$l:\ \ot \circ (u \times id)\to id$, $r:\ \ot \circ (id \times
u)\to id$ are natural isomorphisms, such that certain coherence
conditions are satisfied. $\Cc$ is strict if $a$, $l$ and $r$ are
the identity natural transformations; McLane's coherence theorem
allows us to restrict attention to strict monoidal categories. A
braiding on $\Cc$ is a natural isomorphism $c:\ \ot\to \ot\circ
\tau$ satisfying the following compatibilities:
\begin{enumerate}
\item[(B1)] \qquad $c_{U, \, V\ot W} = (Id_{V} \ot c_{U, \,
W})\circ (c_{U, \, V}\ot Id_{W})$

\item[(B2)] \qquad  $c_{U\ot V, \, W} = (c_{U, \, W}\ot
Id_{V})\circ (Id_{U} \ot c_{V, \, W})$
\end{enumerate}
for all $U,V,W\in \Cc$, where $\tau: \mathcal{C} \times
\mathcal{C} \to \mathcal{C} \times \mathcal{C}$ is the flip functor.
A braiding $c$ is called a symmetry if $c_{U, V}^{-1} =
c_{V, U}$, for all $U$, $V\in \Cc$. A (symmetric) braided category
$(\Cc, c)$ is a monoidal category $\Cc$ equipped with a
(symmetric) braiding $c$. More details on braided categories can
be found in \cite{JS}, \cite{K}.\\

Let $A$ be a $k$-algebra over a commutative ring $k$ and let
$Z(A)$ be the center of $A$. Unadorned $\ot$ means $\ot_k$ and
$A^{(n)}$ will be a shorter notation for the $n$-fold tensor
product $A\ot\cdots\ot A$. ${}_A\Mm_A = ({}_A\Mm_A, - \ot_A - ,
A)$ is the $k$-linear monoidal category of $A$-bimodules. An
$A$-coring $C$ is a coalgebra in ${}_A\Mm_A$. A right $C$-comodule
is a right $A$-module $M$ together with a right $A$-linear map
$\rho:\ M\to M\ot_A C$ satisfying the coassociativity and the
counit axioms. $\Mm^C$ is the category of right $C$-comodules and
right $C$-colinear maps. For further details on corings and
comodules, we refer to \cite{BrzezinskiWisbauer}. An important
example of an $A$-coring is Sweedler's canonical coring $C = A\ot
A$. Identifying $(A\ot A)\ot_A (A\ot A)\cong A^{(3)}$, we will
view the comultiplication as a map $\Delta:\ A^{(2)}\to A^{(3)}$
given by the formula $\Delta(a\ot b) = a\ot 1\ot b$; the counit
$\varepsilon: A^{(2)} \to A$ is given by $\varepsilon (a\ot b) =
ab$. For a right $A$-module $M$, we can identify $M\ot_A (A\ot A)
\cong M\ot A$. A right $A\ot A$-comodule is then a right
$A$-module $M$ together with a $k$-linear map $\rho:\ M\to M\ot
A$, denoted by $\rho(m) = m_{[0]}\ot m_{[1]}$ (summation is
implicitly understood), satisfying the compatibility conditions
\begin{eqnarray}
m_{[0]}m_{[1]}&=&m;\eqlabel{3.1.1}\\
\rho(m_{[0]})\ot m_{[1]}&=& m_{[0]}\ot 1\ot m_{[1]};\eqlabel{3.1.2}\\
\rho(ma)&=&m_{[0]}\ot m_{[1]}a\eqlabel{3.1.3}
\end{eqnarray}
for all $m\in M$ and $a\in A$. A morphism in $\Mm^{A\ot A}$ is a
right $A$-module map $f: M \to N$ such that for any $m\in M$
\begin{equation}\eqlabel{morphSW}
f(m)_{[0]} \ot f(m)_{[1]} = f(m_{[0]}) \ot m_{[1]}
\end{equation}
The category of right $A\ot A$-comodules is denoted by $\Mm^{A\ot
A}$. There is an adjunction pair $(F := - \ot A, \, G := (-)^{{\rm
co}(A\ot A)} )$ between $\Mm_k$ and $\Mm^{A\ot A}$ defined as
follows:
$$
F = - \ot A : \Mm_k \to \Mm^{A\ot A}, \quad G = (-)^{{\rm co}(A\ot
A)} : \Mm^{A\ot A} \to \Mm_k
$$
where for any $k$-module $V$, $F(V) = V \ot A$ is a right $A\ot
A$-comodule with the right $A$-module structure given by the right
multiplication on $A$ and the coaction
\begin{equation}\eqlabel{cofree}
\rho_{V\ot A} : V\ot A \to V \ot A \ot A, \quad \rho_{V\ot A} (v
\ot a) := v \ot 1_A \ot a
\end{equation}
for all $v\in V$ and $a\in A$. If $(M, \rho) \in \Mm^{A\ot A}$,
then $ G (M) = M(-)^{{\rm co}(A\ot A)} := \{ m\in M \, | \, \rho
(m) = m \ot 1_A \}$. $A$ will be viewed as a right $A\ot
A$-comodule via the regular right action given by the
multiplication on $A$ and the right coaction is given by
\begin{equation}\eqlabel{coactA}
\rho_{A} : A\to A \ot A, \quad \rho_{A} (a) :=  1_A \ot a
\end{equation}
for all $a\in A$. Cipolla's noncommutative descent data
\cite{Cipolla} are precisely right $A\ot A$-comodules and
Cipolla's version of the Faithfully Flat Descent Theorem can be
reformulated as follows: $(F, G)$ is a pair of inverse
equivalences if $A$ is faithfully flat over $k$ \cite[Proposition
109]{book}.

\section{Braidings on the category of $A\ot A$-comodules}\selabel{3}
A right $A\ot A$-comodule $M$ is also a $k$-module, so we can
consider $F(M) = M\ot A\in \Mm^{A\ot A}$ via
\begin{equation}\eqlabel{coactVA}
(m\ot a) \, b := m\ot ab \qquad {\rm} \quad \rho : M\ot A \to M\ot
A\ot A, \quad \rho (m \ot a) := m\ot  1_A \ot a
\end{equation}
for all $m\in M$, $a$, $b\in A$. It easily follows from
\equref{3.1.2} and \equref{3.1.3} that the coaction $\rho:\ M \to
M\ot A$ is a morphism in $\Mm^{A\ot A}$. In \cite[Prop.
2.2]{ACM2}, we observed that a right $A\ot A$-comodule $M$ carries
a {\sl left} $A$-module structure given by
\begin{equation}\eqlabel{leftA}
a\cdot m = m_{[0]} \, a \, m_{[1]}
\end{equation}
for all $a\in A$ and $m\in M$. In particular, $M\ot A$ is a left
$A$-module, with left $A$-action $a \cdot (m\ot b) = (m\ot 1_A) a
b = m \ot ab$. Then $\rho:\ M\to M\ot A$ is left $A$-linear;
indeed, for any $a \in A$ and $m\in M$ we have that
$$
\rho (a \cdot m) = \rho (m_{[0]} a m_{[1]}) = m_{[0][0]} \ot
m_{[0][1]} a m_{[1]}\, \equal{\equref{3.1.2}}\,  m_{[0]} \ot a
m_{[1]} = a\cdot \rho(m)
$$
Take $M\in \Mm^{A\ot A}$. It follows from \equref{3.1.1} that the
right $A$-action $\mu_{M}: M \ot A \to M$ is a right $A$-linear
splitting map of the coaction $\rho: \ M \to M\ot A$. However,
$\mu_{M}$ is in general not left $A$-linear since
$$
a \cdot \mu_M (m \ot b) = a \cdot (m b) = m_{[0]} a m_{[1]} b
$$
while
$$
\mu_M ( a\cdot (m \ot b)) = \mu_M (m \ot ab) = m (ab).
$$
This is the major drawback in our attempt to prove the uniqueness
of the braiding on $\Mm^{A\ot A}$; in the proof of \thref{unicitbr}, we will need
a left $A$-linear splitting map for $\rho$. We give sufficient conditions for its existence
in the next lemma.

\begin{lemma}\lelabel{2.1}
Let $A$ be a $k$-algebra, and assume that there exists a
$k$-linear map $E : A \to Z(A)$ such that $E(1_A) = 1_A$. For any
$M\in \Mm^{A\ot A}$, the map $\mu^E_M : M \ot A \to M$, $\mu^E_M
(m\ot a) = m_{[0]} E (m_{[1]}) a$ is a left $A$-linear splitting
map for the coaction $\rho : M \to M \ot A$.
\end{lemma}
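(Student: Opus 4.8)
The plan is to check three things in turn: that $\mu^E_M$ is well defined, that it splits the coaction, and that it is left $A$-linear. Well-definedness is immediate, since $(m,a) \mapsto m_{[0]} E(m_{[1]}) a$ is $k$-bilinear (both $E$ and $\rho$ are $k$-linear), so it factors through the tensor product $M \ot A$. I also note that right $A$-linearity comes for free: $\mu^E_M((m \ot b)c) = m_{[0]} E(m_{[1]}) bc = \mu^E_M(m \ot b)\, c$, so the genuine content of the lemma is the splitting property together with \emph{left} $A$-linearity.

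For the splitting property I would compute $\mu^E_M(\rho(m))$ directly. Writing $\rho(m) = m_{[0]} \ot m_{[1]}$ and applying the defining formula to $m_{[0]}$, I get $\mu^E_M(m_{[0]} \ot m_{[1]}) = m_{[0][0]} E(m_{[0][1]}) m_{[1]}$. The coassociativity relation \equref{3.1.2} rewrites the triple $m_{[0][0]} \ot m_{[0][1]} \ot m_{[1]}$ as $m_{[0]} \ot 1_A \ot m_{[1]}$; feeding this through the $k$-linear map $u \ot v \ot w \mapsto u\, E(v)\, w$ collapses the expression to $m_{[0]} E(1_A) m_{[1]}$. Since $E(1_A) = 1_A$ by hypothesis and $m_{[0]} m_{[1]} = m$ by the counit relation \equref{3.1.1}, this equals $m$, so $\mu^E_M \circ \rho = {\rm Id}_M$.

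The left $A$-linearity is the heart of the argument, and it is exactly here that the assumption $E(A) \subseteq Z(A)$ is used. Fix $a, b \in A$ and $m \in M$. On one side, the left action on $M \ot A$ recalled in the text reads $a \cdot (m \ot b) = m \ot ab$, hence $\mu^E_M(a \cdot (m \ot b)) = m_{[0]} E(m_{[1]}) ab$. On the other side I must evaluate $a \cdot \mu^E_M(m \ot b)$ via the left action \equref{leftA}, which forces me first to determine the coaction of the element $\mu^E_M(m \ot b) = m_{[0]} E(m_{[1]}) b$. Using that $\rho$ is right $A$-linear (\equref{3.1.3}) together with \equref{3.1.2} once more, I expect to obtain $\rho(m_{[0]} E(m_{[1]}) b) = m_{[0]} \ot E(m_{[1]}) b$; then \equref{leftA} gives $a \cdot \mu^E_M(m \ot b) = m_{[0]}\, a\, E(m_{[1]}) b$. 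The two sides now match precisely because $E(m_{[1]}) \in Z(A)$ commutes with $a$, turning $m_{[0]} a E(m_{[1]}) b$ into $m_{[0]} E(m_{[1]}) ab$.

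The only real obstacle is bookkeeping: one must track the Sweedler indices carefully through the nested applications of $\rho$ so that \equref{3.1.2} is applied in the correct tensor slot, in particular when computing the coaction on $\mu^E_M(m \ot b)$. Once that coaction is pinned down, centrality finishes the job, and this pinpoints exactly why the naive splitting $\mu_M$ fails: there the element commuted past $a$ would be $m_{[1]}$ itself, which need not lie in $Z(A)$, whereas inserting $E$ replaces it by the central element $E(m_{[1]})$.
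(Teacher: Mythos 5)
Your proposal is correct and takes essentially the same route as the paper: the splitting computation is identical, and your left $A$-linearity argument---first pinning down $\rho(\mu^E_M(m\ot b)) = m_{[0]}\ot E(m_{[1]})b$ via \equref{3.1.3} and \equref{3.1.2}, then applying \equref{leftA} and the centrality of $E(m_{[1]})$---is just a reorganization of the paper's direct expansion $b\cdot m_{[0]}E(m_{[1]})a = m_{[0][0]}b\,m_{[0][1]}E(m_{[1]})a = m_{[0]}b\,E(m_{[1]})a = m_{[0]}E(m_{[1]})ba$. There are no gaps.
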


\begin{proof}
We first show that $\mu^E_M$ is left $A$-linear. For all $a,b\in A$ and $m\in M$, we have
\begin{eqnarray*}
&&\hspace*{-2cm}
b \cdot \mu^E_M (m\ot a) = b \cdot m_{[0]} E ( m_{[1]}) a =
 m_{[0][0]} b \, m_{[0][1]} E ( m_{[1]}) a \\
&\equal{\equref{3.1.2}}& m_{[0]} b \, E ( m_{[1]}) \, a
= m_{[0]} E ( m_{[1]}) \, b \, a
= \mu^E_M (m\ot ba) = \mu^E_M (b\cdot (m\ot a)).
\end{eqnarray*}
Finally we show that $\mu^E_M\circ \rho = {\rm Id}_M$:
$$
\mu^E_M  (m_{[0]} \ot m_{[1]}) = m_{[0][0]} E( m_{[0][1]}) m_{[1]}
= m_{[0]} E(1_A) m_{[1]} = m,
$$
for all $m\in M$.
\end{proof}

\begin{theorem}\thlabel{braidedcomod}(\cite[Cor. 2.11]{ACM2})
For a $k$-algebra $A$, the category $(\Mm^{A\ot A}, \, - \ot_A -,
\, A)$ of right comodules over Sweedler's canonical coring is
symmetric monoidal. For $M$, $N\in \Mm^{A\ot A}$, the coaction
$\rho$ on $M\ot_A N$ is
\begin{equation}\eqlabel{coact}
\rho: M\ot_A N \to M\ot_A N \ot A, \quad \rho(m\ot_A n) =
m_{[0]}\ot_A n_{[0]}\ot m_{[1]}n_{[1]}
\end{equation}
for all $m\in M$, $n\in N$. The unit is $A$ and the symmetry $c$
is given by the maps
\begin{equation}\eqlabel{canbraid}
c_{M, \, N} : M\ot_A N \to N\ot_A M, \quad c_{M, \, N} (m\ot_A n)
= n_{[0]} \ot_A m \, n_{[1]}
\end{equation}
for any $M$, $N \in \Mm^{A\ot A}$, $m\in M$, $n\in N$.
\end{theorem}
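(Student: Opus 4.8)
The plan is to verify directly that the data in the statement satisfy the axioms of a symmetric monoidal category, exploiting that $\Mm^{A\ot A}$ sits inside the category of $A$-bimodules: each $M\in\Mm^{A\ot A}$ is a right $A$-module by hypothesis and a left $A$-module via $a\cdot m = m_{[0]}am_{[1]}$ as in \equref{leftA}, and $M\ot_A N$ is the corresponding balanced tensor product. The associativity and unit constraints are then simply those of $({}_A\Mm_A, -\ot_A-, A)$, so the coherence conditions come for free; what has to be checked is that all the relevant maps live in $\Mm^{A\ot A}$, i.e. are right $A$-linear and colinear in the sense of \equref{morphSW}.

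First I would show that $M\ot_A N$ is an object of $\Mm^{A\ot A}$. The only real subtlety --- and the step I expect to be the main obstacle --- is that the coaction \equref{coact} must be well defined on the balanced tensor product, i.e. it must send $ma\ot_A n$ and $m\ot_A(a\cdot n)$ to the same element. The key computation behind this is the identity $\rho(a\cdot n)=n_{[0]}\ot an_{[1]}$, which follows from \equref{3.1.3} together with the coassociativity \equref{3.1.2}; granting it, both sides equal $m_{[0]}\ot_A n_{[0]}\ot m_{[1]}an_{[1]}$. Once well-definedness is in hand, axioms \equref{3.1.1}--\equref{3.1.3} for $M\ot_A N$ reduce to the corresponding axioms for $M$ and $N$: the counit axiom uses $m_{[0]}m_{[1]}=m$ after rewriting $n_{[0]}m_{[1]}n_{[1]}=m_{[1]}\cdot n$ and moving $m_{[1]}$ across the balanced tensor, while coassociativity and right $A$-linearity are immediate from \equref{3.1.2} and \equref{3.1.3}. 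Checking that $\ot_A$ is a bifunctor and that the unit constraints $M\ot_A A\cong M$, $A\ot_A M\cong M$ are colinear is then routine.

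Next I would turn to the braiding $c$ of \equref{canbraid}. The same identity $\rho(a\cdot n)=n_{[0]}\ot an_{[1]}$ shows $c_{M,N}$ is well defined on $M\ot_A N$, and right $A$-linearity plus colinearity \equref{morphSW} follow from \equref{3.1.2}--\equref{3.1.3} by short manipulations; naturality in $M$ and $N$ is a direct consequence of the source and target maps being $A$-linear and colinear. The hexagon axioms (B1) and (B2) are then verified by expanding both sides: for (B1) both routes send $u\ot_A(v\ot_A w)$ to $v_{[0]}\ot_A w_{[0]}\ot_A uv_{[1]}w_{[1]}$, and for (B2) the computation uses \equref{3.1.2} to collapse $(w_{[0]})_{[0]}\ot(w_{[0]})_{[1]}$ to $w_{[0]}\ot 1_A$. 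Finally, to see that $c$ is a symmetry I would compute $c_{N,M}\circ c_{M,N}(m\ot_A n)$; applying \equref{3.1.3} to $(mn_{[1]})_{[0]}\ot(mn_{[1]})_{[1]}$ and recognising $n_{[0]}m_{[1]}n_{[1]}=m_{[1]}\cdot n$ brings this to $m_{[0]}m_{[1]}\ot_A n=m\ot_A n$ by \equref{3.1.1}, so $c_{M,N}^{-1}=c_{N,M}$ and $c$ is in particular invertible. Note that symmetry is not automatic from the fact that $\Mm^{A\ot A}\cong\Zz({}_A\Mm_A)$ is braided, so this last computation genuinely has to be carried out.
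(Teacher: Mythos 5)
Your proposal is correct, but it takes a genuinely different route from the paper. The paper's proof is essentially a transport-of-structure argument: it invokes the isomorphism $\Mm^{A\ot A}\cong \Zz({}_A\Mm_A)$ with the center of the bimodule category (citing \cite{ACM2}), which is braided monoidal by general theory, and then only verifies by hand the one thing the center construction does \emph{not} give for free, namely that the braiding is a symmetry --- and that computation ($c_{N,M}\circ c_{M,N}=\Id$ via \equref{3.1.3}, \equref{leftA} and \equref{3.1.1}) is exactly the one you perform at the end. You instead verify everything directly: well-definedness of the coaction \equref{coact} on the balanced tensor product via the left $A$-linearity of $\rho$ (i.e. $\rho(a\cdot n)=n_{[0]}\ot a n_{[1]}$, which the paper also establishes, in the preamble of Section 2), the comodule axioms for $M\ot_A N$, well-definedness, colinearity and naturality of $c$, and the hexagons (B1)--(B2), where your use of \equref{3.1.2} to collapse $w_{[0][0]}\ot w_{[0][1]}$ to $w_{[0]}\ot 1_A$ is the correct key step. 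Your closing remark is also accurate and worth making: since centers are braided but not symmetric in general, the symmetry check is irreducible in either approach. What the paper's route buys is brevity and a conceptual explanation of where the formulas \equref{coact} and \equref{canbraid} come from; what your route buys is a self-contained proof that does not lean on \cite{ACM2} or the center machinery, at the cost of longer (if routine) computations --- including the small but necessary observation, implicit in your ``routine'' steps, that morphisms in $\Mm^{A\ot A}$ are automatically left $A$-linear for the action \equref{leftA} (combine right $A$-linearity with \equref{morphSW}), so that $f\ot_A g$ makes sense.
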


\begin{proof}
This follows from the fact that $\Mm^{A\ot A}$ is isomorphic to the center
$\Zz({}_A\Mm_A)$ of the category of $A$-bimodules, which is braided monoidal,
we refer to \cite{ACM2} for full detail. Let us show that the braiding is a symmetry:
we have that
\begin{eqnarray*}
c_{N, \, M} \circ c_{M, \, N} (m \ot_A n) &=& c_{N, \, M} (n_{[0]}
\ot_A m \, n_{[1]}) \stackrel{\equref{3.1.3}} = m_{[0]} \ot_A
n_{[0]} m_{[1]} n_{[1]}\\ &\stackrel{\equref{leftA}}=& m_{[0]}
\ot_{A} m_{[1]} \cdot n = m_{[0]} m_{[1]} \ot_{A}
n \\
&\stackrel{\equref{3.1.1}} =& m\ot_A n
\end{eqnarray*}
for all $m\in M$ and $n\in N$.
\end{proof}

\begin{proposition}
Let $A$ be an algebra over a commutative ring $k$. We know that
$(\Mm^{A\ot A}, \ot_A, A)$ is a monoidal category, with a
canonical symmetry \equref{canbraid}. The functor $F=-\ot A:\
\Mm_k\to \Mm^{A\ot A}$ is a symmetric monoidal functor.
\end{proposition}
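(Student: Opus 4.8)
The plan is to make the strong monoidal structure on $F$ explicit and then to check its compatibility with the two symmetries. Regarding $(\Mm_k, \ot, k)$ and $(\Mm^{A\ot A}, \ot_A, A)$ as strict monoidal categories (the former with the flip symmetry $\tau$, the latter with the canonical symmetry $c$ of \thref{braidedcomod}), a symmetric monoidal structure on $F$ amounts to a natural isomorphism $\varphi_{U,V}\colon F(U)\ot_A F(V)\to F(U\ot V)$ in $\Mm^{A\ot A}$ together with a comodule isomorphism $\varphi_0\colon A\to F(k)$, subject to the associativity and unit coherences and to compatibility with $\tau$ and $c$. My candidate maps are
\[
\varphi_{U,V}\bigl((u\ot a')\ot_A(v\ot a)\bigr)=(u\ot v)\ot a'a, \qquad \varphi_0(a)=1_k\ot a,
\]
the latter being visibly a right $A$-linear and colinear isomorphism between $A$ with coaction \equref{coactA} and $F(k)=k\ot A$ with coaction \equref{cofree}.

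First I would record, via \equref{leftA} and \equref{cofree}, the left $A$-module structure on $F(V)=V\ot A$ needed to form the balanced tensor product, namely $b\cdot(v\ot a)=v\ot ba$. This lets me push the scalar to the right, $(u\ot a')\ot_A(v\ot a)=(u\ot 1_A)\ot_A(v\ot a'a)$, which makes the well-definedness of $\varphi_{U,V}$ over $\ot_A$ immediate and exhibits its inverse $(u\ot v)\ot a\mapsto(u\ot 1_A)\ot_A(v\ot a)$, so $\varphi_{U,V}$ is bijective. I would then verify that $\varphi_{U,V}$ is a morphism in $\Mm^{A\ot A}$: right $A$-linearity is clear from \equref{coactVA}, while colinearity follows by computing both $(\varphi_{U,V}\ot\Id_A)\circ\rho$ and $\rho\circ\varphi_{U,V}$ using \equref{coact} and \equref{cofree}, each of which returns $(u\ot v)\ot 1_A\ot a'a$. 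Naturality in $U$ and $V$ is routine since $F(f)=f\ot\Id_A$.

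Next I would dispatch the coherence axioms. In the strict setting the associativity constraint reduces to $\varphi_{U\ot V,W}\circ(\varphi_{U,V}\ot_A\Id)=\varphi_{U,V\ot W}\circ(\Id\ot_A\varphi_{V,W})$; both composites send a generator to $(u\ot v\ot w)\ot a''a'a$, so the identity holds by associativity of the multiplication of $A$, and the unit axioms follow similarly from $\varphi_0$ and the identification $F(k)\cong A$. The decisive step is then compatibility with the symmetries, that is, $F(\tau_{U,V})\circ\varphi_{U,V}=\varphi_{V,U}\circ c_{F(U),F(V)}$. Applying $c_{F(U),F(V)}$ to $(u\ot a')\ot_A(v\ot a)$ and using $(v\ot a)_{[0]}=v\ot 1_A$, $(v\ot a)_{[1]}=a$ from \equref{cofree} gives, by \equref{canbraid}, the element $(v\ot 1_A)\ot_A(u\ot a'a)$, which $\varphi_{V,U}$ carries to $(v\ot u)\ot a'a$; on the other side $\varphi_{U,V}$ yields $(u\ot v)\ot a'a$, which $F(\tau_{U,V})=\tau_{U,V}\ot\Id_A$ sends to $(v\ot u)\ot a'a$. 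The two agree, so $F$ respects the symmetry.

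I expect the main obstacle to be not the coherence bookkeeping but pinning down the correct constraint $\varphi_{U,V}$ and confirming that it lands in $\Mm^{A\ot A}$ — in particular checking colinearity, where one must feed the comodule structure \equref{coact} and the induced left action \equref{leftA} in correctly. Once $\varphi_{U,V}$ is shown to be a comodule isomorphism, the associativity, unit, and symmetry conditions are all short direct computations.
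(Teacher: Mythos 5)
Your proposal is correct and follows essentially the same route as the paper: the same monoidal structure maps $\varphi_0(a)=1\ot a$ and $\varphi_{U,V}\bigl((u\ot a')\ot_A(v\ot a)\bigr)=(u\ot v)\ot a'a$, and the same direct computation (using that the coaction \equref{cofree} gives $(v\ot a)_{[0]}\ot (v\ot a)_{[1]}=(v\ot 1_A)\ot a$) showing both composites in the symmetry-compatibility square equal $(v\ot u)\ot a'a$. The only difference is that you spell out the well-definedness over $\ot_A$, the colinearity of $\varphi_{U,V}$, and the coherence axioms, which the paper compresses into ``straightforward computations''.
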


\begin{proof}
For $M$, $N\in \Mm_k$, we have natural isomorphisms
$$ \varphi_{0}: A \to F(k) = k \ot A, \qquad
\varphi_{0}(a) = 1 \ot a$$
$$\varphi_{M, N}: F(M) \ot_{A} F(N) = (M \ot A)\ot_{A}(N \ot A) \to F(M \ot N) = M \ot N \ot
A$$ $$\varphi_{M, N}(m \ot a \ot_{A} n \ot b) = m \ot n \ot ab$$
Straightforward computations show that $F$ together with this
family of natural isomorphisms is a monoidal functor. In order to
show that $F$ preserves the symmetry, we have to show that the
diagram
$$\xymatrix{(M\ot A)\ot_A(N\ot A)\ar[d]_{\varphi_{M, N}}\ar[rr]^{c_{M \ot A, N \ot A}}&&
(N\ot A)\ot_A(M\ot A)\ar[d]^{\varphi_{N, M}}\\
M\ot N\ot A\ar[rr]_{\tau_{M, N} \ot {A}}&& N \ot M \ot A}$$
commutes, for all $M,N\in \Mm_k$. $\tau$ is the symmetry on
$\Mm_k$, and is given by the switch map. Using \equref{canbraid},
we compute
\begin{eqnarray*}
&&\hspace*{-2cm} (\varphi_{N, M} \circ c_{M \ot A, N \ot
A})\bigl((m \ot a) \ot_{A}
(n \ot b)\bigl)\\
&=& \varphi_{N, M}((n \ot b)_{[0]} \ot_{A} (m \ot
a) (n \ot b)_{[1]}) \\
&=& \varphi_{N, M}((n \ot 1_{A}) \ot_{A} (m \ot ab))=
 n \ot m \ot ab;\\
 &&\hspace*{-2cm}
 \bigl((\tau_{M,N} \ot {A})\circ \varphi_{M,N}\bigl)\bigl((m
\ot a) \ot_{A} (n \ot b)\bigl)\\
&=&(\tau_{M,N} \ot Id_{A})(m \ot n \ot ab) = n \ot m \ot ab,
\end{eqnarray*}
for all $a,b \in A$, $m \in M$ and $n \in N$, as needed.
\end{proof}

\begin{remark}
We note that the forgetful functor $F: \Mm^{A\ot A} \to {}_A\Mm_A$
is a strict monoidal functor. In case ${}_A\Mm_A$ is a braided
monoidal category (see \cite[Theorem 2.1]{ACM1}) the functor $F$
is not however a braided monoidal functor. To see this, we
consider $K$ to be a commutative ring such that $2$ is invertible
in $K$, and $A={}^aK^b$ the generalized quaternion algebra having
$\{1,i,j,k\}$ as a $K$-basis, where $a$, $b$ are invertible
elements in $K$. Then we can write down the explicit formula for
the (unique) braiding on ${}_A\Mm_A$ by using the $R$-matrix
described in \cite[Example 2.10]{ACM1}. It follows that $F$ is not
a braided functor by considering the appropriate diagram for the
pair of objects $M = N : =A$ in $\Mm^{A\ot A}$ and checking that
it is not commutative in $i \ot_{A} j$.
\end{remark}

In order to prove the uniqueness of the braiding on $\Mm^{A\ot A}$, we will need the
following Lemma.

\begin{lemma}\lelabel{abrad}
Let $A$ be a $k$-algebra and $a\in A$. Then the natural
transformation $c$ given by
$$
c^a_{M, \, N} : M\ot_A N \to N\ot_A M, \quad c^a_{M, \, N} (m\ot_A
n) = n_{[0]} \ot_A m \, n_{[1]} \, a
$$
is a braiding on the monoidal category $(\Mm^{A\ot A}, \, - \ot_A
-, \, A)$ if and only of $a = 1_A$.
\end{lemma}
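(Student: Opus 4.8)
The plan is to prove the two implications separately, with the forward implication carrying all the content.

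The \emph{if} part is immediate: when $a = 1_A$ the defining formula becomes $c^a_{M,N}(m \ot_A n) = n_{[0]} \ot_A m\, n_{[1]}$, which is exactly the canonical symmetry of \thref{braidedcomod}, hence a braiding.

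For the \emph{only if} part, I would suppose that $c^a$ is a braiding on $(\Mm^{A\ot A}, \ot_A, A)$ and test it against the unit object $A$. The key input is the standard fact that in any braided monoidal category the braiding of the unit object with itself is the identity; this is forced by the hexagon axioms (B1)--(B2) together with the coincidence $l_I = r_I$ of the unit constraints (see \cite{JS}, \cite{K}). Applied to the unit $A$ of $\Mm^{A\ot A}$, this gives $c^a_{A,A} = {\rm Id}_{A \ot_A A}$. I would then compute the left-hand side explicitly: by \equref{coactA} the coaction on the unit object $A$ is $\rho_A(z) = 1_A \ot z$, so that $z_{[0]} = 1_A$ and $z_{[1]} = z$, whence $c^a_{A,A}(x \ot_A y) = y_{[0]} \ot_A x\, y_{[1]}\, a = 1_A \ot_A x y a$. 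The identity $c^a_{A,A} = {\rm Id}$ therefore reads $1_A \ot_A xya = x \ot_A y$ in $A \ot_A A$; applying the unit isomorphism $A \ot_A A \to A$, $x \ot_A y \mapsto xy$, this collapses to $xya = xy$ for all $x,y \in A$, and taking $x = y = 1_A$ yields $a = 1_A$.

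I expect the only delicate point to be the appeal to the triviality of the braiding on the unit object, which is a standard but non-tautological consequence of the braiding axioms; I would either invoke it directly from \cite{JS}, \cite{K} or derive the single instance $c^a_{A,A} = {\rm Id}_{A \ot_A A}$ that I actually need. As a self-contained alternative, one may instead substitute $M = N = W = A$ into the hexagon (B2): a short computation with \equref{coactA} shows that the two composites agree only up to an extra right factor of $a$, which forces $a^2 = a$; since the invertibility of the component $c^a_{A,A}$ forces $a$ to be a unit in $A$, the relation $a^2 = a$ then gives $a = 1_A$ as well. The unit-object argument is shorter, so I would present it as the main line and relegate the hexagon computation to a remark.
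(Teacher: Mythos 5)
Your proof is correct, and its main line is genuinely different from the paper's. The paper argues in two steps, both internal to the axioms as stated: first, since $c^a_{A,A}(x\ot_A y)=1_A\ot_A xya$ must be an isomorphism of right $A$-modules, $a$ has a left inverse; second, evaluating (B2) with $U=V=W=A\ot A$ (the cofree comodule, not the unit object) at the element $1_A\ot 1_A\ot_A 1_A\ot 1_A\ot_A 1_A\ot 1_A$ yields $1_A\ot 1_A\ot 1_A\ot a=1_A\ot 1_A\ot a\ot a$, hence $a^2=a$; combining the two gives $a=1_A$. You instead invoke the general fact that in any braided monoidal category the braiding of the unit object with itself is the identity, which collapses everything to the one-line computation $c^a_{A,A}(x\ot_A y)=1_A\ot_A xya=\mathrm{Id}(x\ot_A y)$, forcing $xya=xy$ and so $a=1_A$. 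This is shorter and needs neither the idempotency relation nor the left-inverse observation, at the price of importing a nontrivial (though entirely standard, and citable from \cite{JS}, \cite{K}, both already in the bibliography) consequence of the braiding axioms; the paper's argument is self-contained, using only (B2), naturality-free evaluation at one element, and elementary ring theory. Your fallback argument in the final paragraph is in fact the paper's proof transported from $A\ot A$ to the unit object $A$: the hexagon there does give $a^2=a$ (both composites applied to $x\ot_A y\ot_A w$ give $1_A\ot_A x\ot_A ywa$ versus $1_A\ot_A xa\ot_A ywa$), and invertibility of $c^a_{A,A}$, which is right multiplication by $a$ under $A\ot_A A\cong A$, gives the needed invertibility of $a$ (the paper settles for a left inverse, which suffices since then $a=(ba)a=ba^2=ba=1_A$). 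One further small difference in your favor: you dispose of the \emph{if} direction explicitly by identifying $c^1$ with the canonical symmetry of \thref{braidedcomod}, whereas the paper leaves that direction implicit.
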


\begin{proof}
If $c^a$ is a braiding then $c^a_{A, A} : A \ot_A A \to A\ot_A A$,
$c^a_{A, A} (x\ot_A y) = 1_A \ot_A xy a$ is an isomorphism of
right $A\ot A$-comodules and, a fortiori, of right $A$-modules,
and this implies that $a$ has a left inverse in $A$. On the other
hand, evaluating both sides of (B2) to $1_A \ot 1_A \ot_{A} 1_A
\ot 1_A \ot_{A} 1_A \ot 1_A$ in the situation where $U = V = W =
A\ot A$, we find that $1_A \ot 1_A \ot 1_A \ot a = 1_A \ot 1_A \ot
a \ot a$. Multiplying the tensor factors, we obtain that $a^2 = a$
and hence $a = 1_A$ since $a$ has a left inverse.
\end{proof}

Now we can state and prove the main result of this note.

\begin{theorem}\thlabel{unicitbr}
Let $A$ be a $k$-algebra, and assume that there exists a $k$-linear map
$E : A \to Z(A)$ such that $E(1_A) = 1_A$. Then there is precisely one
braiding on the monoidal category $(\Mm^{A\ot A}, \, - \ot_A -, \, A)$,
namely the canonical braiding defined in \equref{canbraid}.
\end{theorem}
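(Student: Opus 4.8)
The plan is to show that an arbitrary braiding $d$ on $\Mm^{A\ot A}$ must coincide with the perturbed braiding $c^a$ of \leref{abrad} for a suitable \emph{central} element $a\in Z(A)$, and then to invoke \leref{abrad} to force $a=1_A$. The device that makes this possible is the splitting map $\mu^E_M$ of \leref{2.1}: besides being left $A$-linear it is also right $A$-linear (immediate from the formula $\mu^E_M(m\ot b)=m_{[0]}E(m_{[1]})b$), hence an $A$-bimodule retraction of the comodule morphism $\rho_M\colon M\to F(M):=M\ot A$ (here $M$ is regarded as a $k$-module and $F=-\ot A$). This bilinearity is exactly what is needed to transport $d$ from the comodules $F(M)$ down to arbitrary comodules.

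First I would set up the reduction to the comodules $F(M)$. Since $\rho_M$ is a morphism in $\Mm^{A\ot A}$ that is $A$-bilinear (as recorded in \seref{3}), naturality of $d$ at the pair $(\rho_M,\rho_N)$ gives $d_{F(M),F(N)}\circ(\rho_M\ot_A\rho_N)=(\rho_N\ot_A\rho_M)\circ d_{M,N}$. Postcomposing with the bimodule retraction $\mu^E_N\ot_A\mu^E_M$ and using $\mu^E_M\circ\rho_M=\Id_M$ together with the functoriality of $-\ot_A-$ yields the transport formula
\[
d_{M,N}=(\mu^E_N\ot_A\mu^E_M)\circ d_{F(M),F(N)}\circ(\rho_M\ot_A\rho_N),
\]
so that $d$ is completely determined by its values on the comodules $F(M)$.

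Next I would compute $d$ on these objects. Define $a\in A$ by $d_{A,A}(1_A\ot_A 1_A)=1_A\ot_A a$ (using $A\ot_A A\cong A$). Because $d_{A,A}$ is a comodule morphism one obtains $1_A\ot a=a\ot 1_A$ in $A\ot A$; applying to this identity the $k$-linear maps $x\ot y\mapsto xcy$ for $c\in A$ gives $ca=ac$ for all $c$, whence $a\in Z(A)$. For $m\in M$, $n\in N$ the insertions $\hat m\colon k\to M$, $\hat n\colon k\to N$ ($1\mapsto m$, $1\mapsto n$) induce comodule morphisms $F(\hat m)\colon A\to F(M)$, $F(\hat n)\colon A\to F(N)$, and naturality of $d$ at $(F(\hat m),F(\hat n))$ evaluated at $1_A\ot_A 1_A$ yields $d_{F(M),F(N)}\bigl((m\ot 1_A)\ot_A(n\ot 1_A)\bigr)=(n\ot 1_A)\ot_A(m\ot a)$; right $A$-linearity then propagates this to
\[
d_{F(M),F(N)}\bigl((m\ot x)\ot_A(n\ot y)\bigr)=(n\ot 1_A)\ot_A(m\ot a\,x\,y).
\]

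Finally I would substitute this into the transport formula. Feeding $(\rho_M\ot_A\rho_N)(m\ot_A n)=(m_{[0]}\ot m_{[1]})\ot_A(n_{[0]}\ot n_{[1]})$ through, then applying $\mu^E_N\ot_A\mu^E_M$ and collapsing the iterated coactions by \equref{3.1.1}--\equref{3.1.3} and $E(1_A)=1_A$, one arrives at $d_{M,N}(m\ot_A n)=n_{[0]}\ot_A m_{[0]}\,a\,m_{[1]}\,n_{[1]}$; centrality of $a$ rewrites $m_{[0]}a\,m_{[1]}=m_{[0]}m_{[1]}a=ma$, so $d_{M,N}(m\ot_A n)=n_{[0]}\ot_A m\,n_{[1]}\,a=c^a_{M,N}(m\ot_A n)$. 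As $d$ is a braiding, \leref{abrad} forces $a=1_A$, that is, $d=c$. The main obstacle is the bookkeeping in this last step---keeping the two coactions $\rho_M,\rho_N$ and the central element $a$ correctly ordered while reducing via the comodule axioms---together with the conceptual point underpinning everything: \leref{2.1} supplies an $A$-\emph{bilinear} (not merely one-sided) retraction of $\rho_M$, without which the transport formula, and hence the reduction to the comodules $F(M)$, would break down.
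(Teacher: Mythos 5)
Your proof is correct, and its central reduction is genuinely different from the paper's. The outer frame is shared: transport the unknown braiding $d$ from the cofree comodules $F(M)=M\ot A$ to arbitrary ones via naturality at the coactions $\rho_M,\rho_N$, followed by a splitting built from \leref{2.1} (the paper splits with $\mu_N\ot_A\mu^E_M$, you with $\mu^E_N\ot_A\mu^E_M$; both work, and your remark that $\mu^E$ is $A$-bilinear is correct, though one-sided linearity on each factor is all that is really needed), and then finish with \leref{abrad}. The difference is how $d$ gets pinned down on cofree objects. The paper analyzes the component $c_{A^{(2)},A^{(2)}}$ at $F(A)=A^{(2)}$: it transports it to $\gamma:A^{(3)}\to A^{(3)}$, reduces to $\delta$, combines colinearity \equref{21} with naturality under the left-multiplication maps $f_a$ and the maps $f_m:A^{(2)}\to M\ot A$ to reach the description \equref{29} by an element $R\in A\ot A$, and then still needs the self-consistency evaluation at $M=N=A^{(2)}$ to force $R=1_A\ot\alpha$. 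You instead use naturality at morphisms out of the unit object, $F(\hat m):A\to F(M)$, $b\mapsto m\ot b$ (these are indeed morphisms in $\Mm^{A\ot A}$), plus the observation that the elements $(m\ot 1_A)\ot_A(n\ot 1_A)$ generate $F(M)\ot_A F(N)$ as a right $A$-module, since $(m\ot x)\ot_A(n\ot y)=\bigl[(m\ot 1_A)\ot_A(n\ot 1_A)\bigr](xy)$; hence $d$ on all cofree objects is determined by the single element $d_{A,A}(1_A\ot_A 1_A)=1_A\ot_A a$, whose centrality follows, as you say, from the colinearity of $d_{A,A}$. I checked the computations this rests on, including the final collapse to $n_{[0]}\ot_A m_{[0]}\,a\,m_{[1]}n_{[1]}=n_{[0]}\ot_A m\,n_{[1]}\,a$; they are sound. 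Your shortcut eliminates the paper's equations \equref{20}--\equref{28} and the final consistency step; what the paper's longer route yields in exchange is the intermediate ``R-matrix'' description \equref{29} of an arbitrary braiding on cofree comodules, which has some independent interest. One further shortening is available to you: in any braided monoidal category the braiding of the unit object with itself is the identity (a standard consequence of the hexagon axioms and naturality), so your $a$ is forced to equal $1_A$ at the outset and \leref{abrad} is not strictly needed, though invoking it, as you do, keeps the argument self-contained.
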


\begin{proof}
Let $c$ be a braiding on $\Mm^{A\ot A}$. For morphisms $f: M\to
M'$ and $g: N \to N'$ in $\Mm^{A\ot A}$, the following diagram commutes,
by the naturality of $c$:
$$
\begin {CD}
M \ot_A N  @> c_{M, N} >> N\ot_A M\\
@V f\ot_A g VV @ VV g\ot_A f V\\
M'\ot_A N' @> c_{M', N'} >> N'\ot_A M'
\end{CD}
$$
As we have seen at the end of \seref{0.1}, $A\ot A=F(A)\in \Mm^{A\ot A}$,
and $A\ot A$ is also a left $A$-module, via \equref{leftA}, which takes the form
$a \cdot (b \ot c) = b \ot ac$.\\
The identification $A^{(3)} \cong A^{(2)}\ot_A A^{(2)}$ transports the isomorphism
$c_{A^{(2)}, \, A^{(2)}}: A^{(2)}\ot_A A^{(2)}\to  A^{(2)}\ot_A
A^{(2)}$
to an isomorphism $ \gamma : A^{(3)} \to A^{(3)}$. Then $c_{A^{(2)}, \, A^{(2)}}$ can be
computed from $\gamma$ as follows:
$$
c_{A^{(2)}, \, A^{(2)}} (a \ot b \ot_A a' \ot b') = c_{A^{(2)}, \,
A^{(2)}} (a \ot 1_A \ot_A b\cdot (a' \ot b') ) = \gamma (a \ot a'\ot b b'),
$$
where we identified $A^{(2)}\ot_A A^{(2)}$ and $A^{(3)}$ in the last identity.
$\gamma$ is completely determined by the map
$$
\delta : A^{(2)} \to  A^{(3)}, \quad \delta (a \ot b) = \gamma (a
\ot b \ot 1_A).
$$
Since $\gamma$ is right $A$-linear, we have
$$
\gamma (a \ot b \ot c) = \gamma (a \ot b \ot 1_A) c = \delta (a\ot b) c.
$$
Now we adopt the temporary notation:
$$\delta (a\ot b) = \sum \delta^1 (a\ot b) \ot \delta^2 (a\ot b) \ot
\delta^3 (a\ot b) \in A^{(3)}.$$
Then we have that
$$
\gamma (a \ot b \ot c) =  \sum \delta^1 (a\ot b) \ot \delta^2
(a\ot b) \ot \delta^3 (a\ot b) \, c
$$
and
\begin{equation}\eqlabel{20}
c_{A^{(2)}, \, A^{(2)}} (a \ot 1_A \ot_A a' \ot b') = \sum
\delta^1 (a\ot a') \ot 1_A \ot_A \delta^2 (a\ot a') \ot \delta^3
(a\ot a') \, b',
\end{equation}
for all $a,a',b\in A$. $c_{A^{(2)}, \, A^{(2)}}$ is a right
$A\ot A$-colinear map, this means that the following diagram commutes:
$$
\begin {CD}
A^{(2)} \ot_A A^{(2)} @> c_{A^{(2)}, A^{(2)}} >> A^{(2)}\ot_A A^{(2)}\\
@V \rho VV @ VV \rho V\\
A^{(2)}\ot_A A^{(2)} \ot A @> c_{A^{(2)}, A^{(2)}}\ot A>> A^{(2)}\ot_A
A^{(2)} \ot A
\end{CD}
$$
where $\rho:\ A^{(2)} \ot_A A^{(2)} \to A^{(2)}\ot_A A^{(2)} \ot
A$, the right $A\ot A$-coaction defined in \equref{coact}, is
given by the formula
$$\rho (a \ot 1_A \ot_A a' \ot b') = a\ot 1_A \ot_A a' \ot 1_A \ot
b'.$$
Evaluating the diagram at $a \ot 1_A \ot_A a' \ot 1_A$, we find that
$\delta$ satisfies the equation
\begin{equation}\eqlabel{21}
\sum \delta^1 (a\ot a') \ot \delta^2 (a\ot a') \ot \delta^3 (a\ot
a') \ot 1_A = \delta^1 (a\ot a') \ot \delta^2 (a\ot a') \ot 1_A
\ot \delta^3 (a\ot a'),
\end{equation}
for all $a,a'\in A$. In fact, it can be shown easily that the right $A\ot A$-colinearity
of $c_{A^{(2)}, \, A^{(2)}}$ is equivalent to \equref{21}, but this will not be needed.\\
For $a\in A$, the map $f_a : A^{(2)} \to A^{(2)}$
given by $f_a (x \ot y) := ax \ot y$ is a morphism in $\Mm^{A\ot
A}$. The naturality of $c$ implies that the following diagram commutes, for
all $a,b\in A$:
$$
\begin {CD}
A^{(2)} \ot_A A^{(2)} @> c_{A^{(2)}, A^{(2)}} >> A^{(2)}\ot_A A^{(2)}\\
@V f_a \ot_A f_b VV @ VV f_b\ot_A f_a V\\
A^{(2)}\ot_A A^{(2)} @> c_{A^{(2)}, A^{(2)}} >> A^{(2)}\ot_A
A^{(2)}
\end{CD}
$$
Evaluating this diagram at $1_A \ot 1_A \ot_A 1_A \ot c$, we obtain that
\begin{equation}\eqlabel{25}
c_{A^{(2)}, \, A^{(2)}} (a \ot 1_A \ot_A b \ot c) = \sum bs^1 \ot
1_A \ot_A a \, s^2 \ot s^3 \, c
\end{equation}
for any $a$, $b$, $c\in A$, where $\delta (1_A \ot 1_A) = \sum s^1
\ot s^2 \ot s^3 \in A^{(3)}$. This implies that $\delta$ is completely determined
by $\delta (1_A \ot 1_A)$:
\begin{equation}\eqlabel{26}
\delta (a \ot b) = \sum bs^1 \ot  a \, s^2 \ot s^3.
\end{equation}
Combining  \equref{21} and  \equref{26},
$$
\sum a' \, s^1 \ot a \, s^2 \ot s^3 \ot 1_A = \sum a' \, s^1 \ot a
\, s^2 \ot 1_A \ot s^3
$$
for all $a,a'\in A$. In particular, for $a = a' = 1_A$ we
obtain
$$
\sum s^1 \ot  s^2 \ot s^3 \ot 1_A = \sum  s^1 \ot s^2 \ot 1_A \ot
s^3
$$
Multiplying the second and the third tensor factor, we find that
 $s = \sum s^1 \ot s^2 s^3 \ot 1_A$. We conclude that
there exists an element $R = \sum R^1 \ot R^2 \in A\ot A$ such
that $s = R\ot 1_A$. Then we have that
\begin{equation}\eqlabel{27}
\delta (a \ot b) = \sum b \, R^1 \ot  a \, R^2 \ot 1_A
\end{equation}
\begin{equation}\eqlabel{28}
c_{A^{(2)}, \, A^{(2)}} (a \ot 1_A \ot_A b \ot c) = \sum b \, R^1
\ot 1_A \ot_A a \, R^2 \ot c
\end{equation}
for all $a,b,c\in A$. We can easily prove that $c_{A^{(2)},
\, A^{(2)}}$ as defined in \equref{28} is an isomorphism if and only if
$R$ is invertible in the algebra $A\ot A$, but this will not be needed.\\
For $M \in \Mm_A$ and $m\in M$, the map $f_m : A^{(2)}
\to M\ot A$, $f_m (a \ot b) = ma \ot b$, is a morphism in
$\Mm^{A\ot A}$, where $M\ot A$ is viewed as a right $A\ot
A$-comodule via \equref{coactVA}. From the naturality of $c$, it follows
that the following diagram commutes, for all $M,N \in
\Mm_A$, $m\in M$ and $n\in N$:
$$
\begin {CD}
A^{(2)} \ot_A A^{(2)} @> c_{A^{(2)}, A^{(2)}} >> A^{(2)}\ot_A A^{(2)}\\
@V f_m \ot_A f_n VV @ VV f_n\ot_A f_m V\\
M\ot A \ot_A N\ot A @> c_{M\ot A, N\ot A} >> M\ot A \ot_A N\ot A
\end{CD}
$$
Evaluating this diagram at $1_A \ot 1_A \ot_A 1_A \ot a$ and
using \equref{28} we obtain that
\begin{equation}\eqlabel{29}
c_{M\ot A, \, N\ot A} (m \ot 1_A \ot_A n \ot a) = \sum n \, R^1
\ot 1_A \ot_A m \, R^2 \ot a
\end{equation}
for all $m\in M$, $n\in N$ and $a\in A$. This means that the braiding $c$ is
completely determined in all cofree objects $M\ot A$ of the
category $\Mm^{A\ot A}$ by the element $R \in A\ot A$.\\
For $M\in \Mm^{A\ot A}$, the coaction $\rho_M : M \to M \ot A$ is a morphism in $\Mm^{A\ot A}$,
so the following diagram commutes, again by the naturality of $c$:
$$
\begin {CD}
M \ot_A N @> c_{M, N} >> N\ot_A M \\
@V \rho_M \ot_A \rho_N VV @ VV \rho_N\ot_A \rho_M V\\
M\ot A \ot_A N\ot A @> c_{M\ot A, N\ot A} >> M\ot A \ot_A N\ot A
\end{CD}
$$
Evaluating this diagram at $m\ot_A n$, we find that
\begin{eqnarray}
&&\hspace*{-2cm}
(\rho_N\ot_A \rho_M) ( c_{M, N} (m\ot_A n)) = c_{M\ot A, N\ot
A} (m_{[0]} \ot m_{[1]} \ot_A  n_{[0]} \ot n_{[1]}  ) \nonumber\\
&=& c_{M\ot A, N\ot A} (m_{[0]} \ot 1_A \ot_A  m_{[1]} \cdot (n_{[0]} \ot n_{[1]} ) )\nonumber \\
&=& c_{M\ot A, N\ot A} (m_{[0]} \ot 1_A \ot_A n_{[0]} \ot m_{[1]}  n_{[1]}  )\nonumber \\
&\equal{\equref{29}}& \sum n_{[0]} R^1 \ot 1_A \ot_A m_{[0]}
R^2 \ot m_{[1]}  n_{[1]}.\eqlabel{30}
\end{eqnarray}
The multiplication map $\mu_N$ is right $A$-linear and splits $\rho_N$, and the
map $\mu_M^E$ from \leref{2.1} is left $A$-linear and splits $\rho_M$. This implies
that $\mu_N \ot_A \mu^E_M$ splits $\rho_N\ot_A\rho_M$. Applying $\mu_N \ot_A \mu^E_M$
to \equref{30}, we obtain that
\begin{eqnarray*}
c_{M, N} (m\ot_A n) &=& \sum n_{[0]} R^1 \ot_A \mu^E_M ( m_{[0]}
R^2 \ot m_{[1]} n_{[1]} )\\
&=& \sum n_{[0]} R^1 \ot_A m_{[0][0]} E( m_{[0][1]} R^2 ) m_{[1]}
n_{[1]} \\
&\equal{\equref{3.1.2}}& \sum n_{[0]} R^1 \ot_A m_{[0]} E(R^2 ) m_{[1]}
n_{[1]} \\
&=& \sum n_{[0]} R^1 \ot_A m_{[0]}  m_{[1]} E(R^2 )
n_{[1]} \\
&\equal{\equref{3.1.1}}& \sum n_{[0]} R^1 \ot_A m E(R^2 ) n_{[1]} \\
&=& \sum n_{[0]} \ot_A R^1 \cdot (m E(R^2 ) n_{[1]}) \\
&=& \sum n_{[0]} \ot_A m_{[0]} R^1 m_{[1]} E(R^2 )
n_{[1]},
\end{eqnarray*}
for any $m\in M$ and $n\in N$. In the special case where  $M = N =
A\ot A$, we evaluate this formula to $a \ot 1_A \ot_A b \ot c$.
Using \equref{28}, we obtain that
$$
\sum b R^1 \ot 1_A \ot_A a R^2 \ot c = \sum b \ot 1_A \ot_A a\ot R^1 E(R^2) c
$$
for all $a,b,c \in A$. In particular, for $a = b = c = 1_A$,
we find
$$
\sum R^1 \ot R^2 \ot 1_A = \sum 1_A \ot 1_A \ot R^1 E(R^2).
$$
Multiplying the second and the third tensor factors, we obtain that $ R =
\sum 1_A \ot R^1 E(R^2)$, so we can conclude that  $R = 1_A \ot \alpha$, for some
$\alpha \in A$. Therefore, we obtain:
$$
c_{M, N} (m\ot_A n) = n_{[0]} \ot_A m_{[0]} m_{[1]} E(\alpha )
n_{[1]} = n_{[0]} \ot_A m E(\alpha ) n_{[1]} = n_{[0]} \ot_A m
n_{[1]} E(\alpha ).
$$
It then follows from \leref{abrad} that $c$ is the canonical
symmetry given by \equref{canbraid}.
\end{proof}

Let us finally examine the existence of a unitary $k$-linear map
$A\to Z(A)$.

\begin{proposition}
Let $A$ be an algebra over a commutative ring $k$. There exists
a unitary $k$-linear map  $E:\ A\to Z(A)$ in each of the following situations:
\begin{enumerate}
\item $A$ is commutative;
\item $k$ is a field;
\item $A$ is an augmented algebra, for example a bialgebra;
\item $A$ is a separable $k$-algebra.
\end{enumerate}
\end{proposition}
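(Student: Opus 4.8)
The plan is to handle the four situations separately, since each relies on a different structural feature of $A$; in every case I would exhibit an explicit $k$-linear map $E$ and then check the two required properties, namely that $E$ takes values in $Z(A)$ and that $E(1_A)=1_A$. The first three cases are essentially formal. If $A$ is commutative then $Z(A)=A$, so $E=\mathrm{Id}_A$ works trivially. If $A$ is augmented, I would use the augmentation $\varepsilon:\ A\to k$ (in the bialgebra case this is the counit, which is a unital algebra map) and set $E(a)=\varepsilon(a)1_A$; this is $k$-linear, lands in $k\cdot 1_A\subseteq Z(A)$, and satisfies $E(1_A)=\varepsilon(1_A)1_A=1_A$. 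If $k$ is a field, I would observe that $Z(A)$ is a $k$-subspace of $A$ containing $1_A$, and that over a field every subspace is a direct summand; hence the inclusion $Z(A)\hookrightarrow A$ admits a $k$-linear retraction $E$, which automatically fixes $1_A$ because $1_A\in Z(A)$.

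The separable case is the one requiring a genuine construction, and it is where I expect the only real work to lie. Here I would invoke a separability idempotent $e=\sum e^1\ot e^2\in A\ot A$, characterised by $\sum e^1 e^2 = 1_A$ together with the bimodule condition $\sum a\,e^1\ot e^2 = \sum e^1\ot e^2\,a$ for all $a\in A$, and define $E(a)=\sum e^1\,a\,e^2$. Unitality is immediate from the first property, since $E(1_A)=\sum e^1 e^2 = 1_A$, and $k$-linearity is clear.

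The crux is the centrality $E(a)\in Z(A)$, which is the step I would flag as the main obstacle (mild though it is). The clean way to see it is to apply, for fixed $a$, the $k$-bilinear map $x\ot y\mapsto x\,a\,y$ (well defined on $A\ot_k A$) to both sides of the identity $\sum b\,e^1\ot e^2 = \sum e^1\ot e^2\,b$: the left-hand side yields $\sum b\,e^1\,a\,e^2 = b\,E(a)$ and the right-hand side yields $\sum e^1\,a\,e^2\,b = E(a)\,b$, so that $b\,E(a)=E(a)\,b$ for every $b\in A$. This gives $E(a)\in Z(A)$ and completes the separable case, and hence the proposition.
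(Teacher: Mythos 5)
Your proof is correct and follows essentially the same route as the paper: the paper dismisses the first three cases as obvious (your identity map, linear retraction onto $Z(A)$, and $E(a)=\varepsilon(a)1_A$ are precisely the intended constructions), and for the separable case it defines the same map $E(a)=\sum e^1 a e^2$ from a separability idempotent. Your explicit verification of centrality, by applying $x\ot y\mapsto xay$ to the identity $\sum b\,e^1\ot e^2=\sum e^1\ot e^2\,b$, is exactly the argument the paper leaves implicit.
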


\begin{proof}
The first three cases are obvious. Let $A$ be a separable algebra
with separability idempotent $e = e^1\ot e^2$ (summation
understood), i.e.
\begin{equation}\eqlabel{sepalgebra}
a e^1 \ot e^2 =  e^1 \ot e^2 a \qquad {\rm and} \qquad e^1 e^2 = 1
\end{equation}
for all $a\in A$. The map $E: A\to A$, $E(a) = e^1ae^2$ meets the
requirements: $E(a)\in Z(A)$ follows from the centrality condition
and $E(1_A)=1_A$ follows from the normality condition in
\equref{sepalgebra}.
\end{proof}

We end our paper with the following question: does there exist a commutative ring $k$
and a $k$-algebra $A$ for which there exists a second braiding on $\Mm^{A\ot A}$?

\end{document}